\numberwithin{equation}{section}
\newtheorem{Thm}{Theorem}
\newtheorem{Pro}{Proposition}[section]
\newtheorem{Lem}{Lemma}[section]
\newtheorem{Rmk}{Remark}[section]
\newtheorem{Cor}{Corollary}[section]
\newcommand{\D}{\Delta}
\newcommand{\dg}{\dagger}
\newcommand{\E}{\mathds{E}}
\newcommand{\e}{\eta}
\newcommand{\et}{\eta_t}
\newcommand{\ets}{\eta_t^{e_o}}
\newcommand{\eps}{\varepsilon}
\newcommand{\Fc}{\mathcal{F}}
\newcommand{\g}{\gamma}
\renewcommand{\l}{\mathscr{L}}
\newcommand{\lbd}{\lambda}
\newcommand{\Nc}{\mathcal{N}}
\renewcommand{\O}{\Omega}
\newcommand{\p}{\mathds{P}}
\renewcommand{\r}{\rho}
\renewcommand{\S}{\mathcal{S}}
\newcommand{\T}{\mathds{T}}
\newcommand{\xis}{\xi_t^{e_o}}
\newcommand{\xit}{\xi_t}
\newcommand{\wij}{w_{i\to j}}
\newcommand{\Z}{\mathds{Z}}
\newcommand{\1}{\mathds{1}}
\title{\textsc{\Large Multiple Phase Transitions for an Infinite System of Spiking Neurons}}
\author{\textsc{\large A. M. B. Nascimento}\thanks{Partially supported by CNPq grant 155972/2018-9}\thanksgap{1ex}\thanks{Centro de Ciências Exatas e da Terra, Universidade Federal do Rio Grande do Norte, Av. Senador Salgado Filho 3000, Campus Universitário de Natal, 59078-970 Natal RN, Brasil. (email: antonio.nascimento@ufrn.br)}}
\date{}
\begin{document}

\maketitle

\vspace{-1cm}

\begin{abstract}
We consider a stochastic model describing the spiking activity of a countable set of neurons spatially organized into a homogeneous tree of degree $d$, $d \geq 2$; the degree of a neuron is just the number of connections it has. Roughly, the model is as follows. Each neuron is represented by its membrane potential, which takes non-negative integer values. Neurons spike at Poisson rate 1, provided they have strictly positive membrane potential. When a spike occurs, the potential of the spiking neuron changes to 0, and all neurons connected to it receive a positive amount of potential. Moreover, between successive spikes and without receiving any spiking inputs from other neurons, each neuron's potential behaves independently as a pure death process with death rate $\gamma \geq 0$. In this article, we show that if the number $d$ of connections is large enough, then the process exhibits at least two phase transitions depending on the choice of rate $\gamma$: For large values of $\gamma$, the neural spiking activity almost surely goes extinct; For small values of $\gamma$, a fixed neuron spikes infinitely many times with a positive probability, and for ``intermediate'' values of $\gamma$, the system has a positive probability of always presenting spiking activity, but, individually, each neuron eventually stops spiking and remains at rest forever.
\end{abstract}

\vspace{.5cm}
 \noindent{\bf AMS 2010 Mathematics Subject Classification.} 60G55, 60K35, 92B99
\vspace{.5cm}

 \noindent{\bf Key words and phrases:} system of spiking neurons, multiple phase transition, trees

\vspace{.5cm}

\section{Introduction}\label{SEC1}
In this article, we consider  a continuous-time Markov process describing the spiking activity propagation within an infinite network of neurons. This process, which we shall denote by $\xit$, is informally defined as follows. Let $V$ be an arbitrary countable set and consider a graph $G=(V,E)$ to represent the network of neurons; vertices in $V$ represent neurons, and edges in $E$ indicate the existence of a connection (or interaction) between them. For each $t\geq 0$, the random variable $\xit$, which takes values in $\Z_+^V$, where $\Z_+=\{0,1,\ldots\}$, is a configuration giving the membrane potential for all neurons in $V$, that is to say, $\xit(i)\in \Z_+$ stands for the membrane potential of neuron $i$ at time $t$. This configuration evolves in time, with random transitions governed by the following rules. To each neuron, we assign a Poisson process of rate 1 in such a way that when the exponential time occurs at an {\it active neuron} --- one with strictly positive membrane potential ---, it spikes. As a result, two things happen at the same time within the system: (i) the potential of the spiking neuron is immediately changed to a resting value, which is assumed to be 0, and (ii) all neurons in the network which are connected to the spiking neuron increase their potentials by a positive value, which may vary for each pair of neurons. Furthermore, between two consecutive spikes and without receiving any spiking inputs from other neurons, each neuron's potential behaves independently as a pure death process with death rate $\g\geq 0$ per unit of potential. This part of the dynamics is to model the so-called {\it leakage effect}, a natural phenomenon that causes neurons to spontaneously lose potential. The rate $\g$ will be, therefore, called the {\it leakage parameter}. In this article, we shall assume that neurons are spatially organized into infinite trees, which are, roughly speaking, connected graphs with undirected edges and no cycles. A rigorous description of this model will be given in the next section.

Since the neuron's potential vanishes each time it spikes, this process can be seen as an extension to the continuous-time framework of the model introduced by Galves and Löcherbach~\cite{Galves2013} (see also~\cite{Galves2015} for a critical review), where neurons are represented through their spike trains. There are many other versions, in discrete and continuous time, of this model in the literature; let us mention~\cite{Brochini2016,DeMasi2015,Duarte2014,Duarte2015,Fournier2016,Yaginuma2016,Ferrari2018}. We will also be interested in a particular version of the variant introduced by Ferrari et al.~\cite{Ferrari2018}, which is equivalent to a Markov process similar to ours, with the same spiking and leakage parameters, but for which we do not allow neurons to accumulate more than one unit of potential.

Our motivation for this project comes, in part, from that work by Ferrari et al.~\cite{Ferrari2018}. The authors have proved the existence of a phase transition situation for the neural spiking activity in that paper, which was interpreted as a sudden change of behavior in the number of spikes emitted by each neuron as a function of the leakage parameter. Specifically, by taking the network of neurons as the one-dimensional integer lattice $\Z$ (equipped with its usual graph structure), they found a distinguished value, say $\g_c$, $0< \g_c< \infty$, for the leakage parameter for which the following holds: If the process starts from a potential configuration in which all neurons are active, then there exists a positive probability that each neuron spikes infinitely many times for $\g<\g_c$, while that for $\g>\g_c$, each neuron eventually stops spiking and remains at rest forever. These behaviors are usually called {\it local survival} and {\it local extinction}, respectively, in the statistical physics literature, and so $\g_c$ can be seen as the critical point corresponding to local survival of the spiking activity in $\Z$.

From this scenario, a natural question emerges when the concept of {\it global survival} comes into play. By global survival, we mean that the neural spiking activity has a positive probability of always being present within the system. Let $\g_*$ be the critical value of $\g$, so that the neural spiking activity goes extinct with probability 1 if $\g$ is above $\g_*$, and survives globally if $\g$ is below $\g_*$. Local survival implies global survival; therefore $\g_c\leq \g_*$. We wonder if $\g_c<\g_*$. When this strict inequality holds, the process is said to exhibit two phase transitions.

The purpose of this article is to show that we have two phase transitions for the presented process when it starts from a state that has a single active neuron. Here, we shall only consider homogeneous trees to represent the network of neurons. A homogeneous tree is the unique connected tree in which every vertex has degree $d$, $d\geq 2$, where the degree of a vertex is the number of connections it has. On this structure, we show that the process exhibits two phase transitions, provided the number $d$ of connections is large enough, and at least one phase transition for all $d\geq 2$. In addition, we present some estimates for the expectation of the total membrane potential within the system at any instant of time.

Before finishing this introduction, let us briefly point out some of our motivations for taking neurons spatially organized into trees. Firstly, some theoretical works on systems of spiking neurons have considered the underlying graph as a slightly supercritical Erdös-Rényi random graph (see e.g.~\cite{Galves2013}), commonly based on arguments by Beggs and Plenz~\cite{Beggs2003}, who say that networks of living neurons should behave in a slightly supercritical state. However, it has been shown in~\cite{Jara2014} that the neighborhood of random points in a slightly supercritical Erdös-Rényi graph looks like the one of a fixed vertex in a tree. Therefore, trees seem to be as likely as Erdös-Rényi random graphs to represent, at least locally, real networks of neurons. Another reason is that the main purpose of the current article concerns the existence of multiple phase transitions for the presented process and, as suggested by Pemantle's work~\cite{Pemantle1992} on the contact process, trees are probably the most straightforward graph structure on which we can observe such behavior.

The remainder of the article is organized as follows. In Section~\ref{SEC2} below, we present the general definition of the model, introduce some notation, and state our main result, Theorem~\ref{Thm1}, whose proof follows easily from Theorem~\ref{Thm2} on bounds for the critical values corresponding to local and global survival. In Section~\ref{SEC3}, we prove Theorem~\ref{Thm2} and discuss some implications of its proof for a special version of the model introduced by Ferrari et al.~\cite{Ferrari2018}.

\section{Model definition and Main Results}\label{SEC2}
In this section, we present a general definition of the process we study. To do so, the following elements are needed:
\begin{enumerate}[wide,label={(\roman*)}]
	\item A graph $G=(V,E)$, consisting of a countable set of vertices $V$ and a set of (undirected) edges $E$, $E\subset V^2$, to represent the network of neurons. Graph $G$ is assumed to be connected and with vertices (neurons, from now on) of bounded degree. 
	
	\item A matrix $w=(\wij)_{i,j\in V}$ with non-negative integer entries to measure the potential gain induced by spikes; $\wij$ will be the value added to the potential of neuron $j$ when neuron $i$ spikes. We assume here that, for each $i\in V$, $\wij=0$ unless $j\in \Nc_i$, where $\Nc_i=\{j\in V: \{i,j\}\in E\}$ is the {\it interaction neighborhood} of $i$ in $G$. Moreover, we impose that
	\begin{align}
	\lbd:=\sup_{i\in V} \sum_{j\in \Nc_i} \wij <\infty.
	\end{align}
	
	\item A parameter $\g\geq 0$ to model the leakage effect; we will call $\g$ the {\it leakage parameter}.
\end{enumerate}

For a fixed $w$, the model we consider is thus defined as the continuous-time Markov process $\xit=(\xit(i), i\in V)$ on $G$ with state space being
\begin{align*}
\S=\left\{\xi\in\Z_+^V:\textstyle\sum_{i\in V}\xi(i)<\infty \right\},
\end{align*}
($\xi$ is a configuration giving the state of the membrane potential for the set $V$ of all neurons), whose generator is given, for any cylindrical function, by
\begin{align}\label{G}
L f(\xi)=\sum_{i\in V} \1\{\xi(i)\geq 1\}\left(f(\D_i^* \xi)-f(\xi)\right)+\g\sum_{i\in V}\xi(i) \left(f(\D_i^{\dg} \xi)-f(\xi) \right),
\end{align}
where for each $\xi\in \S$ such that $\xi(i)\geq 1$, $\D_i^* \xi$ is given by
\begin{align}\label{Map1}
\begin{aligned}
(\D_i^*\xi)(j)&=\xi(j)+\wij,\quad&&\text{if } j\in \Nc_i,\\
&=\xi(j),\quad&&\text{if }j\in V\setminus \left(\Nc_i\cup \{i\}\right),\\
&=0,\quad&&\text{if }j=i,
\end{aligned}
\end{align}
and for each $\xi\in \S$, $\D_i^{\dg} \xi$ is defined as
\begin{align}\label{Map2}
\begin{aligned}
(\D_i^{\dg} \xi)(j)&=\xi(j)-1,\quad&&\text{if }j=i,\\
&=\xi(j),\quad&&\text{if }j\neq i.
\end{aligned}
\end{align}

An interpretation of~\eqref{G} is as follows. Its first term describes the neural spiking activity within the system: If neuron $i$ is active, when it has a strictly positive membrane potential, then at rate 1 it spikes, instant at which its potential is reset to 0 and, at the same time, every neuron $j$ belonging to its interaction neighborhood $\Nc_i$ adds the amount of $\wij$ to its potential. The second term models the spontaneous loss of potential that neurons are subject to within the system, due to the leakage effect: If neuron $i$ has membrane potential equal to $k$, $k\geq 1$, then after an exponential time of rate $\g k$, its potential decreases to $k-1$.

For coupling purposes, it is also convenient to construct $\xit$ using a graph technique called graphical representation~\cite{Harris1978}. For that sake, we associate to each neuron $i\in V$ independent Poisson processes $N_i^*$ of rate 1, and $N_{i,k}^{\dagger}$ of rate $\g$ for $k\geq 1$. The $N_i^*$ process will give the times at which neuron $i$ spikes, and the $N_{i,k}^{\dagger}$ processes will give the times at which neuron $i$ is affected by the leakage. The construction of the process is done as follows. Starting with finitely many neurons with strictly positive potential, at an arrival time of $N_i^*$, if $\xit(i) \geq 1$, we replace its value by $0$ and the value of $\xit(j)$ by $\xit(j)+\wij$ for all $j\in \Nc_i$, and at an arrival time of $N_{i,k}^{\dagger}$, we replace the value of $\xit(i)$ by $\xit(i)-1$ if $k \leq \xit(i)$.

Along this paper we focus on the case $G=\T_d$, where $\T_d$ is the homogeneous tree with degree $d$, $d\geq 2$ (note that $\T_2=\Z$, the one-dimensional integer lattice). In doing so, it will be convenient to think of $\T_d$ arranged into {\it levels} in such a way that some distinguished neuron, denoted by $o$ and called the {\it root}, is in level $0$ and every neuron in level $n\in \Z$ has exactly one neighbor (its parent) in level $n-1$ and the other neighbors (its children) in level $n+1$. We shall use $l(i)$ to indicate the level of neuron $i\in V$. For future reference, let us set
\begin{align}\label{Dfabt}
\begin{aligned}
w_1& = \sup_{i\in V}\left\{\wij: l(j)=l(i)-1\right\},\\
w_2 &= \sup_{i\in V}\left\{\wij: l(j)=l(i)+1\right\}.
\end{aligned}
\end{align}
Clearly, all these quantities are finite.

We shall denote by $\xit^\xi$ the process starting from a potential configuration $\xi\in \S$ at time $0$. When $\xi$ equals $e_o$, 
where
\begin{align}
\begin{aligned}
e_o(i)&=1,\quad&&\text{if }i= o;\\
&=0,\quad&&\text{if }i\neq o,
\end{aligned}
\end{align}
we shall write $\xis$ 
(or $\xit^{e_o,\g}$ when it is necessary to make explicit the leakage parameter in the notation). 

Let $|\xis|=\sum_{i\in V} \xis(i)$ be the total membrane potential present within the system at time $t$. We shall say that $\xis$ {\it survives (globally)} if
\begin{align*}
\p_w\left(|\xis|\geq 1,\; \forall t\geq 0\right)>0
\end{align*}
where $\p_w$ stands for the law of the process with a fixed matrix $w$. When $\xis$ does not survive, we say it {\it dies out}. If
\begin{align*}
\p_w\left( \xis(o)\geq 1 \mbox{ i.o.}\right)>0
\end{align*}
we shall say that $\xis$ {\it survives locally}. 
We remark that, since $\T_d$ is a connected graph, the definition of local survival does not depend on the particular neuron $o$. We also remark that, given Lemma~\ref{Lem5} in Appendix~\ref{AP1}, the probability of global survival $\p_w(|\xit^{e_o,\g}|\geq 1,\; \forall t\geq 0 )$ is a non-increasing function of parameter $\g$.

Now, given $w$, let $\g_l$ and $\g_g$ be the following critical values for the leakage parameter $\g$:
\begin{align}
\begin{aligned}
\g_l&=\sup\left\{\g: \p_w\left( \xit^{e_o,\g}(o)\geq 1 \mbox{ i.o.}\right)>0\right\},\\
\g_g&=\sup\left\{\g: \p_w\left(|\xit^{e_o,\g}|\geq 1,\; \forall t\geq 0\right)>0\right\}.
\end{aligned}
\end{align}
In words, $\g_l$ is the critical value corresponding to the local survival and $\g_g$ is the critical value corresponding to the global survival of the spiking activity in $\T_d$. Clearly, $\g_l\leq \g_g$. Our main result, which is stated next, presents sufficient conditions to have $\g_l < \g_g$, and hence, two phase transitions for the process.

\begin{Thm}\label{Thm1}
	We have $\g_l<\g_g$ (two phase transitions for $\xis$) whenever $d> 4w_1w_2+1$.
\end{Thm}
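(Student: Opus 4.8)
The statement will follow immediately once we establish the announced Theorem~\ref{Thm2}, which gives quantitative bounds on $\g_l$ and $\g_g$; concretely, it suffices to produce an upper bound $\g_l \leq \bar{\g}_l$ and a lower bound $\g_g \geq \underline{\g}_g$ with $\bar{\g}_l < \underline{\g}_g$, and to show that the gap $\underline{\g}_g - \bar{\g}_l$ is positive precisely under the hypothesis $d > 4w_1w_2+1$. So the real work is to prove those two one-sided bounds, and I would organize the argument around a comparison with a branching-type process on the tree.

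\emph{Lower bound on $\g_g$ (global survival for $\g$ not too large).} The plan is to dominate $\xis$ from below by a supercritical branching random walk (or a multitype branching process indexed by levels) on $\T_d$. Since each active neuron spikes at rate $1$ and, when it spikes, deposits weight at each of its $d$ neighbours, a spike at a neuron tends to create roughly $d$ new active neighbours; leakage at rate $\g$ per unit potential competes against this. I would couple $|\xis|$ with a continuous-time Galton--Watson process whose offspring mean per spike is controlled from below in terms of $d$ and $\g$ (being careful that a neuron carrying $k$ units of potential loses one at rate $\g k$, so one should track potential rather than just ``active/inactive'' status, or work with the $0/1$-capped version à la Ferrari et al.\ to get a clean lower bound). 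Survival of that branching process for $\g$ below some explicit threshold of order $\sim d/\text{(something involving }w_1,w_2)$ gives $\g_g$ bounded below.

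\emph{Upper bound on $\g_l$ (local extinction for $\g$ above a smaller threshold).} Here I would use a first-moment / exploration argument on the path from the root outward. The key point distinguishing local from global survival on a tree is that, for the root to spike infinitely often, the activity must repeatedly return to a \emph{fixed} vertex, not merely persist somewhere in the ever-expanding tree. I would estimate $\E[\#\{\text{spikes of }o\}]$ (or the expected number of ``generations'' in which activity revisits level $0$) by summing over self-avoiding paths in $\T_d$ that leave $o$ and come back, weighting each edge traversal by the relevant rate. Because a path of length $n$ in $\T_d$ contributes a combinatorial factor like $d(d-1)^{n-1}$ but a return path of length $2n$ contributes a decaying analytic weight governed by $w_1 w_2$ and $\g$, the series $\sum_n d(d-1)^{\ldots}(\text{weight})^n$ converges once $\g$ exceeds a threshold of order $\sim w_1 w_2$ times a constant — crucially \emph{independent of $d$} to leading order, which is what opens the gap. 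Convergence of that expectation forces $\xis(o) \geq 1$ only finitely often a.s., hence $\g_l$ bounded above by that threshold.

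\emph{Closing the argument and the main obstacle.} Once both bounds are in hand, one checks that $\bar{\g}_l < \underline{\g}_g$ reduces to an inequality equivalent to $d > 4w_1 w_2 + 1$ (the factor $4$ and the $+1$ coming from the $(d-1)$ branching count and the square-root/quadratic thresholds that typically appear in branching-process survival criteria and in geometric-series convergence radii). The monotonicity of global survival in $\g$ (noted after Lemma~\ref{Lem5}) guarantees these suprema behave like genuine thresholds, so no pathology with the $\sup$'s arises. I expect the main obstacle to be the local-extinction bound: setting up a clean path-counting estimate requires controlling how potential accumulates and is lost along a return excursion — in particular handling the fact that a neuron can be ``recharged'' multiple times before it spikes, and that leakage acts per unit of potential — so one likely passes to the monotone $0/1$-capped comparison process (the Ferrari et al.\ variant mentioned in the introduction), proves the estimate there, and transfers it back by the coupling. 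Making the weighting of each path traversal rigorous, and verifying that the resulting convergence threshold for $\g$ does not secretly grow with $d$, is the delicate step on which the whole ``two distinct transitions'' phenomenon rests.
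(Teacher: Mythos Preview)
Your overall architecture---prove an upper bound $\bar\g_l$ on $\g_l$ and a lower bound $\underline\g_g$ on $\g_g$, then check $\bar\g_l<\underline\g_g$ iff $d>4w_1w_2+1$---is exactly the paper's, and your plan for $\underline\g_g$ (dominate from below by the $0/1$-capped process, then by a Galton--Watson process on the tree yielding $\g_g\ge d-2$) matches the paper's Proposition~\ref{Prop3} closely.

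The gap is in your treatment of the upper bound on $\g_l$. Two concrete problems. First, the coupling direction: the $0/1$-capped Ferrari et al.\ process $\ets$ satisfies $\ets\le\xis$, so proving local extinction for $\ets$ tells you nothing about local extinction for $\xis$; you need to dominate $\xis$ from \emph{above}, not below. Second, your claim that the local-extinction threshold is ``independent of $d$ to leading order'' is false, and this matters for the arithmetic. The paper's actual bound is $\g_l\le 2\sqrt{w_1w_2(d-1)}-1$, which grows like $\sqrt d$; the two-transition window opens because $\underline\g_g=d-2$ grows \emph{linearly} in $d$, not because $\bar\g_l$ stays bounded. Checking $2\sqrt{w_1w_2(d-1)}-1<d-2$ is what gives precisely $d>4w_1w_2+1$.

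The paper obtains this $\sqrt d$ bound not by path counting but via a Lyapunov function: set $\nu_\r(\xi)=\sum_i\xi(i)\r^{l(i)}$, compute $L\nu_\r\le(c_\r-1-\g)\nu_\r$ with $c_\r=w_1\r^{-1}+w_2(d-1)\r$, minimize over $\r$ to get $c_{\r_*}=2\sqrt{w_1w_2(d-1)}$, and conclude by a supermartingale argument that $\nu_{\r_*}(\xis)\to0$ a.s.\ once $\g>c_{\r_*}-1$, forcing $\xis(o)\to0$. A path-counting argument can in principle recover the same $\sqrt d$ threshold (the minimizing $\r$ is the exponential tilt that balances upward and downward steps in a return excursion), but your sketch as written would not get there: the combinatorics of return paths in $\T_d$ contributes a factor $(d-1)^n$ for a path of length $2n$, so the convergence radius of your series necessarily depends on $d$, and you have not identified the mechanism (the $\r$-tilt, or equivalently the square-root in the Cauchy--Schwarz/AM--GM balancing of up- and down-weights) that reduces this to $\sqrt{d-1}$ rather than $d-1$.
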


As a particular conclusion of this result, we have the following.

\begin{Cor}
	If the matrix $w$ is such that for each $i\in V$, $\wij=\1\{j\in\Nc_i\}$, then $\g_l<\g_g$ for all $d\geq 6$.
\end{Cor}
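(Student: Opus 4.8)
The plan is simply to specialize Theorem~\ref{Thm1} to the homogeneous weight matrix $\wij=\1\{j\in\Nc_i\}$. Under this choice every edge of $\T_d$ carries weight exactly $1$: for any neuron $i$ and any $j\in\Nc_i$ we have $\wij=1$, irrespective of whether $j$ is the parent of $i$ (i.e.\ $l(j)=l(i)-1$) or a child of $i$ (i.e.\ $l(j)=l(i)+1$). Hence, recalling the definitions in~\eqref{Dfabt}, $w_1=\sup_{i\in V}\{\wij:l(j)=l(i)-1\}=1$ and, in the same way, $w_2=1$. (The standing finiteness requirement $\lbd=\sup_{i\in V}\sum_{j\in\Nc_i}\wij<\infty$ is trivially met here, since $\lbd=d$.)

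Substituting $w_1=w_2=1$ into the hypothesis of Theorem~\ref{Thm1}, the condition $d>4w_1w_2+1$ reads $d>5$, that is, $d\geq 6$, because $d$ is an integer with $d\geq 2$. Theorem~\ref{Thm1} therefore gives $\g_l<\g_g$ for every $d\geq 6$, which is exactly the assertion of the corollary. There is no real obstacle to overcome: the corollary is an immediate numerical specialization of the main theorem, and the only point to check is the elementary evaluation $w_1=w_2=1$ for the all-ones interaction matrix.
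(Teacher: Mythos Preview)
Your argument is correct and is precisely the intended one: the paper presents this corollary as an immediate specialization of Theorem~\ref{Thm1}, and your computation $w_1=w_2=1$, hence $4w_1w_2+1=5$, so that $d>5\iff d\geq 6$, is exactly the content of that specialization.
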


Theorem~\ref{Thm1} follows immediately from Theorem~\ref{Thm2} below. Our strategy to proving it is standard, and consists in deducing an upper bound for $\g_l$ and a lower bound for $\g_g$ which implies that $\g_l < \g_g$ if $d$ is sufficiently large.

\begin{Thm}\label{Thm2}
	We have the following bounds for the two critical values:
	\begin{align}\label{ub1}
	0<\g_l\leq 2\sqrt{w_1 w_2(d-1)}-1
	\end{align}
	and
	\begin{align}\label{ub2}
	d-2\leq \g_g \leq \lbd-1
	\end{align}
\end{Thm}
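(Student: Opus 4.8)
The four inequalities in \eqref{ub1}--\eqref{ub2} call for four separate arguments: the two upper bounds are first--moment (Lyapunov) estimates, and the two lower bounds compare $\xis$ with a branching structure; I treat them in turn. \textbf{Upper bound $\g_g\le\lbd-1$.} The plan is to control the total mass. A generator computation --- using that every active neuron carries at least one unit of potential and, when it spikes, redistributes a total amount at most $\lbd$ --- gives $L|\cdot|(\xi)\le(\lbd-1-\g)|\xi|$ for the function $\xi\mapsto|\xi|$. Hence, by Dynkin's formula (first applied to $|\cdot|\wedge N$, letting $N\to\infty$ to obtain $\E|\xis|<\infty$, and then to $|\cdot|$ itself),
\begin{equation*}
\E|\xis|\;\le\;e^{(\lbd-1-\g)t}.
\end{equation*}
If $\g>\lbd-1$ this tends to $0$; since the all--zero configuration is absorbing, $\{|\xis|\ge1\ \forall t\ge0\}=\bigcap_{n\in\N}\{|\xi_n^{e_o}|\ge1\}$ is a decreasing intersection of probability at most $\lim_n\E|\xi_n^{e_o}|=0$, so the activity dies out a.s.\ and $\g_g\le\lbd-1$.

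\textbf{Upper bound $\g_l\le 2\sqrt{w_1w_2(d-1)}-1$.} Here I would use the linear Lyapunov function $\Phi(\xi)=\sum_{v\in V}r^{l(v)}\xi(v)$ with $r=\sqrt{w_1/((d-1)w_2)}$, the value minimising $w_1/r+(d-1)rw_2=2\sqrt{w_1w_2(d-1)}$. Since each neuron $i$ has one neighbour in level $l(i)-1$ and $d-1$ neighbours in level $l(i)+1$, a spike at $i$ changes $\Phi$ by $-r^{l(i)}\xi(i)+\sum_{j\in\Nc_i}r^{l(j)}w_{i\to j}\le r^{l(i)}\big(2\sqrt{w_1w_2(d-1)}-\xi(i)\big)\le r^{l(i)}\big(2\sqrt{w_1w_2(d-1)}-1\big)$ (using $\xi(i)\ge1$ when $i$ spikes), while a leakage at $i$ changes $\Phi$ by $-r^{l(i)}\le0$. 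As $\1\{\xi(i)\ge1\}\le\xi(i)$, this yields $L\Phi(\xi)\le(2\sqrt{w_1w_2(d-1)}-1-\g)\Phi(\xi)$, so by Dynkin's formula (after checking $\E\,\Phi(\xis)<\infty$, which holds because $\E\,\xis(v)$ decays superexponentially in the graph distance from $o$ while $r^{l(v)}$ varies only geometrically)
\begin{equation*}
\E\,\xis(o)\;\le\;\E\,\Phi(\xis)\;\le\;e^{(2\sqrt{w_1w_2(d-1)}-1-\g)t}\qquad(\Phi(e_o)=1).
\end{equation*}
For $\g>2\sqrt{w_1w_2(d-1)}-1$ the right--hand side is integrable on $[0,\infty)$; since the expected number of spikes of $o$ equals $\int_0^\infty\p(\xis(o)\ge1)\,\dist t\le\int_0^\infty\E\,\xis(o)\,\dist t$, the root spikes only finitely often a.s. Applying the same estimate to each neighbour of $o$ shows every neighbour also spikes finitely often, so $o$ is activated only finitely many times; together with $\int_0^\infty\E\,\xis(o)\,\dist t<\infty$ (which prevents $o$ from being active on an unbounded interval) this gives $\p(\xis(o)\ge1\text{ i.o.})=0$, i.e.\ $\g_l\le2\sqrt{w_1w_2(d-1)}-1$.

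\textbf{The lower bounds.} For $\g_g\ge d-2$ the plan is a Galton--Watson lower bound: fix a rooted subtree of $\T_d$ in which $o$ and every vertex has $d-1$ children and explore it. A neuron that spikes feeds at least one unit of potential to each of its $d-1$ children, and a neuron that has just received one unit spikes before leaking it away with probability at least $1/(1+\g)$; ignoring all subsequent re--injections makes these events independent across disjoint subtrees, so the set of explored neurons that ever spike dominates a Galton--Watson tree with offspring law stochastically above $\mathrm{Bin}(d-1,1/(1+\g))$, which is supercritical exactly when $\g<d-2$. On the survival event there is an infinite ray $o=u_0,u_1,\dots$ with $u_k$ first spiking at time $T_k$, $T_0<T_1<\cdots$, and with $u_{k+1}$ active throughout $[T_k,T_{k+1})$ and $o$ active on $[0,T_0)$; since $\E\int_0^M|\xis|\,\dist t<\infty$ forces only finitely many spikes in $[0,M]$, we get $T_k\to\infty$, so $|\xis|\ge1$ for all $t$ with positive probability and $\g_g\ge d-2$. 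For $\g_l>0$: fix a bi--infinite geodesic $P\ni o$ in $\T_d$ and, on the same graphical representation, run the model on $P\cong\Z$ with the inherited clocks and weights; a short coupling gives $\et(o)\le\xis(o)$ for every $t$ (a spike at a vertex outside $P$ can only \emph{add} potential to $P$, and such a vertex has at most one neighbour on $P$), so local survival of $\xis$ from $e_o$ on $\T_d$ is implied by local survival of the one--dimensional model from a single active site, and the latter holds for all sufficiently small $\g$ --- essentially the one--dimensional phase transition of Ferrari et al.~\cite{Ferrari2018} (reached either by a further comparison with their potential--capped model, whose critical leakage on $\Z$ is strictly positive, or by a direct renormalisation argument in one dimension).

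\textbf{Main obstacle.} The genuinely hard point is $\g_l>0$: the reduction to one dimension is easy, but the one--dimensional local--survival statement itself requires a contour/renormalisation argument (or invoking~\cite{Ferrari2018}), and making the proof self--contained would mean carrying such an argument out in full. By contrast the two upper bounds are routine once the Lyapunov functions $|\cdot|$ and $\Phi$ are chosen --- the role of $d$ enters only through the AM--GM optimum $2\sqrt{w_1w_2(d-1)}$, with no need for sharp walk counts --- and the Galton--Watson comparison for $\g_g$ is standard; the only other thing needing care throughout is the usual truncation justification of the Dynkin steps for the unbounded functions $|\cdot|$ and $\Phi$.
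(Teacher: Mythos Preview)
Your proposal is correct and follows the same overall strategy as the paper: the same level-weighted Lyapunov functions $\nu_\rho(\xi)=\sum_i\xi(i)\rho^{l(i)}$ (with $\rho=1$ and the optimal $\rho=\rho_*$) for the two upper bounds, a branching comparison on the rooted subtree for $\g_g\ge d-2$, and reduction to $\Z$ for $\g_l>0$. Two tactical points differ and are worth noting. For the upper bound on $\g_l$, the paper does not go through integrability of $\E\,\xis(o)$ and the ``neighbours spike finitely often'' detour; instead it observes that $M_t=\nu_{\rho_*}(\xis)/\phi_\g(\rho_*)^t$ is a nonnegative supermartingale (Markov property plus the same generator inequality you derived), and since $\phi_\g(\rho_*)<1$ for $\g>2\sqrt{w_1w_2(d-1)}-1$, its a.s.\ convergence forces $\nu_{\rho_*}(\xis)\to0$, hence $\xis(o)\to0$, directly --- shorter and avoiding any discussion of spike counts. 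For $\g_l>0$, the paper does not invoke~\cite{Ferrari2018} but carries out a self-contained Bramson--Durrett comparison with $1$-dependent oriented percolation on $\Z$ (choosing boxes so that, conditioned on no leakage marks, a ``good'' interval propagates to its two translates with high probability, then taking $\g$ small to kill the leakage); your proposed alternative of coupling down to the potential-capped model and citing~\cite{Ferrari2018} is also valid and is exactly the shortcut you identified as the main obstacle.
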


Another immediate conclusion from Theorem~\ref{Thm2} follows. Since $\g_l\leq \g_g$, the bounds in~\eqref{ub2} and~\eqref{ub1}, combined with the fact that $\lbd \geq 2$, show that
\begin{Cor}
    For all $d\geq 2$, we have
    \begin{align*}
        0<\g_g \leq \lbd-1.
    \end{align*}
    Therefore, the process $\xis$ has at least one phase transition.
\end{Cor}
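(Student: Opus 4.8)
This last corollary is a direct consequence of Theorem~\ref{Thm2}, so the plan is simply to assemble estimates already established; there is no genuine obstacle, the analytic content all residing in Theorem~\ref{Thm2} itself.

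First I would settle the two‑sided bound on $\g_g$. The inequality $\g_g\leq\lbd-1$ is exactly the right‑hand estimate of~\eqref{ub2}, so it requires nothing new, and in particular $\g_g<\infty$. For positivity, recall that local survival implies global survival, so that $\g_l\leq\g_g$, while the left‑hand estimate of~\eqref{ub1} gives $\g_l>0$; chaining these two facts yields $0<\g_l\leq\g_g$. Together with the observation that $\lbd\geq2$ --- each neuron of $\T_d$ having $d\geq2$ neighbours, to each of which it delivers a positive amount of potential upon spiking --- the interval $(0,\lbd-1]$ is non‑empty, and we conclude $0<\g_g\leq\lbd-1<\infty$.

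It then remains to phrase this dynamically. By the monotonicity in $\g$ of the global‑survival probability $\p_w(|\xit^{e_o,\g}|\geq1,\;\forall t\geq0)$ recalled earlier (Lemma~\ref{Lem5}), the set of leakage parameters for which this probability is positive is an interval with right endpoint $\g_g$. Consequently, straight from the definition of $\g_g$ as a supremum, the spiking activity survives globally with positive probability for every $\g<\g_g$, whereas for every $\g>\g_g$ one has $\p_w(|\xit^{e_o,\g}|\geq1,\;\forall t\geq0)=0$, i.e.\ the process almost surely reaches the absorbing configuration in which all potentials vanish. Since $0<\g_g<\infty$, both regimes are genuinely realised as $\g$ ranges over $(0,\infty)$, which is precisely the claim that $\xis$ undergoes at least one phase transition (when $d>4w_1w_2+1$, Theorem~\ref{Thm1} supplies a second one at $\g_l<\g_g$, but for general $d\geq2$ only the transition at $\g_g$ is asserted). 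The only points calling for a word of care are that $\g_g$ be simultaneously strictly positive and finite, so that the transition is non‑degenerate, and the appeal to monotonicity that turns the supremum $\g_g$ into an honest threshold; both are immediate from Theorem~\ref{Thm2} and Lemma~\ref{Lem5}.
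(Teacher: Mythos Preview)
Your proposal is correct and follows essentially the same approach as the paper: combine $\g_l\leq\g_g$ with the bounds $\g_l>0$ from~\eqref{ub1} and $\g_g\leq\lbd-1$ from~\eqref{ub2}, then note $\lbd\geq 2$. Your additional paragraph invoking Lemma~\ref{Lem5} to spell out why a finite positive $\g_g$ genuinely separates two regimes is a helpful elaboration the paper leaves implicit, but it does not constitute a different route.
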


The proof of Theorem~\ref{Thm2} is done in Section~\ref{SEC3}. The strategy is as follows. The upper bounds in~\eqref{ub1} and~\eqref{ub2} are both obtained based on a method employed by Liggett~\cite{liggett2013stochastic} (see Theorem 4.8. therein) to compute the exact values of the two critical points for the branching random walk on homogeneous trees. Generally speaking, the method consists of relating global and local survival of the process to properties of an auxiliary process specified by a convenient weighting function defined on $\S$. To get the lower bound in~\eqref{ub2}, we apply a coupling argument that compares our process to a non-spatial branching process, which is constructed by using the graphical representation of the process and exploring the nice structure (no cycles) of trees.

\section{Bounds on the Critical Values --- Proof of Theorem~\ref{Thm2}}\label{SEC3}

Let $\nu_\r:\S\to [0,\infty)$ be a weighting function defined by
\begin{align}\label{wf}
\nu_\r(\xi)=\sum_{i\in V}\xi(i)\r^{l(i)},
\end{align}
where $\r$ is a non-negative parameter to be specified later. Note that $\nu_1(\xi)=|\xi|$.

In what follows, we shall consider and study the auxiliary process $\nu_\r(\xit^\e)$ to deduce upper bounds for $\g_l$ and $\g_g$. Our key ingredient will be the following lemma.
\begin{Lem}\label{Lem2}
	For any configuration $\e\in \S$ and any $t\geq 0$, we have
	\begin{align}\label{MI}
	\E_w(\nu_\r(\xit^\e))\leq \left(\phi_\g(\r)\right)^t \nu_\r(\e),
	\end{align}
	where $\E_w$ stands for the expectation concerning the probability measure $\p_w$, and
	\begin{align}
	\begin{aligned}
	\phi_\g(\r)&=\exp\left(\lbd-1-\g\right),\quad&&\text{if }\r=1;\\
	&=\exp\left(w_1 \r^{-1}+ w_2 (d-1)\r-1-\g\right),\quad&&\text{if }\r\neq 1.
	\end{aligned}
	\end{align}
\end{Lem}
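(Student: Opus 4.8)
The plan is to compute the action of the generator $L$ on the function $\nu_\r$ (viewed as a cylindrical functional of the configuration, at least after the usual truncation/localization argument justified by the fact that the process started from a finite configuration stays finite and has finitely many spikes in bounded time), show that $L\nu_\r \le (\log\phi_\g(\r))\,\nu_\r$ pointwise on $\S$, and then integrate this differential inequality. First I would fix $\xi\in\S$ and evaluate $L\nu_\r(\xi)$ using \eqref{G}, \eqref{Map1} and \eqref{Map2}. The leakage part contributes $\g\sum_i \xi(i)\big(\nu_\r(\D_i^\dg\xi)-\nu_\r(\xi)\big) = -\g\sum_i \xi(i)\r^{l(i)} = -\g\,\nu_\r(\xi)$, since removing one unit of potential at $i$ lowers $\nu_\r$ by $\r^{l(i)}$. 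For the spiking part, when an active neuron $i$ fires, its own contribution $\xi(i)\r^{l(i)}$ is wiped out (a negative term $-\xi(i)\r^{l(i)} \le -\r^{l(i)}$ because $\xi(i)\ge 1$ on the event $\{\xi(i)\ge1\}$), while each neighbour $j\in\Nc_i$ gains $\wij\r^{l(j)}$. The neighbours of $i$ split into the single parent at level $l(i)-1$ and the $d-1$ children at level $l(i)+1$ (here the tree structure and the convention on levels are used), so $\sum_{j\in\Nc_i}\wij\r^{l(j)} \le w_1\r^{l(i)-1} + w_2(d-1)\r^{l(i)+1}$ by the definitions \eqref{Dfabt}. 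Hence each firing neuron $i$ changes $\nu_\r$ by at most $\big(w_1\r^{-1}+w_2(d-1)\r-1\big)\r^{l(i)}$, and summing over active $i$ gives $L\nu_\r(\xi) \le \big(w_1\r^{-1}+w_2(d-1)\r-1-\g\big)\sum_{i:\xi(i)\ge1}\r^{l(i)} \le \big(w_1\r^{-1}+w_2(d-1)\r-1-\g\big)\,\nu_\r(\xi)$, using $\r^{l(i)}>0$ and $\xi(i)\ge1$ to bound $\sum_{i:\xi(i)\ge1}\r^{l(i)}\le\sum_i\xi(i)\r^{l(i)}=\nu_\r(\xi)$. For $\r=1$ this constant is $\lbd-1-\g$ after replacing $w_1 + w_2(d-1)$ by the uniform bound $\lbd$ from axiom (ii); for $\r\neq1$ it is exactly $\log\phi_\g(\r)$ as defined.

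Next I would turn the pointwise generator bound $L\nu_\r \le c_\g(\r)\,\nu_\r$, with $c_\g(\r):=\log\phi_\g(\r)$, into the claimed moment inequality \eqref{MI}. The clean way is to set $u(t):=\E_w\big(\nu_\r(\xit^\e)\big)$ and appeal to Dynkin's formula: $u(t) = \nu_\r(\e) + \int_0^t \E_w\big(L\nu_\r(\xi_s^\e)\big)\,ds \le \nu_\r(\e) + c_\g(\r)\int_0^t u(s)\,ds$, whence Grönwall's inequality yields $u(t)\le e^{c_\g(\r)t}\nu_\r(\e) = \big(\phi_\g(\r)\big)^t\nu_\r(\e)$. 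The only genuinely delicate point — and the one I expect to be the main obstacle — is the justification of Dynkin's formula for the unbounded functional $\nu_\r$: one must control $\E_w(\nu_\r(\xit^\e))$ a priori (finiteness and, for $\r>1$, sufficient integrability so that the martingale $M_t = \nu_\r(\xit^\e)-\nu_\r(\e)-\int_0^t L\nu_\r(\xi_s^\e)\,ds$ is a true martingale rather than a local one). The standard remedy is a localization argument: introduce cut-offs $\nu_\r^{(n)}$ that agree with $\nu_\r$ on configurations supported in the ball of radius $n$ and vanish outside, or equivalently stop at the first time the spiking activity reaches level $n$ from the root; apply Dynkin's formula to the bounded truncated functional, obtaining $\E_w(\nu_\r^{(n)}(\xi_{t\wedge\t_n}^\e))\le \nu_\r(\e) + c_\g(\r)\int_0^t \E_w(\nu_\r^{(n)}(\xi_{s\wedge\t_n}^\e))\,ds$ (the generator estimate only improves under truncation since we are discarding nonnegative contributions from far-away neurons and keeping the damping), run Grönwall, and then let $n\to\infty$ using monotone convergence ($\nu_\r^{(n)}\uparrow\nu_\r$) together with $\t_n\to\infty$ a.s., which holds because from a finite initial configuration the process performs only finitely many spikes in $[0,t]$ and therefore has bounded range. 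That gives \eqref{MI} for all $t\ge0$, $\r\ge0$ and $\e\in\S$.

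I should note two small book-keeping items that the proof write-up should not skip. First, the case $\r=1$ has to be treated slightly apart from $\r\neq1$ only because the bound $w_1+w_2(d-1)$ is loosened to $\lbd$ there; one could in fact keep the sharper constant $w_1+w_2(d-1)-1-\g$ for $\r=1$ too, but since $\lbd\ge w_1+w_2(d-1)$ the stated weaker form is what is needed downstream and is what I would prove. Second, the degenerate value $\r=0$ makes $\nu_\r\equiv$ (mass at level $0$) and $\phi_\g(0)=+\infty$ via the $w_1\r^{-1}$ term, so \eqref{MI} is vacuously true there; no special care is needed. The rest is the routine computation sketched above, and the heart of the argument is really just the one-line identity "firing at $i$ multiplies the contribution of $i$ by at most $w_1\r^{-1}+w_2(d-1)\r$ and leakage multiplies the whole thing by $e^{-\g}$ per unit time", which is exactly the tree analogue of Liggett's weighting-function computation for branching random walk.
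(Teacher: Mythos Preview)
Your approach is the same as the paper's --- compute $L\nu_\r$, obtain a linear differential inequality, and integrate via Gr\"onwall --- and your localization sketch for Dynkin's formula is actually more careful than the paper's appeal to ``classical results on Markovian generators''. However, there is a genuine sign error in your generator estimate that breaks the argument precisely in the regime where the lemma matters.

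You bound the reset term by $-\xi(i)\r^{l(i)}\le -\r^{l(i)}$, then combine with the leakage to reach
\[
L\nu_\r(\xi)\ \le\ \bigl(w_1\r^{-1}+w_2(d-1)\r-1-\g\bigr)\sum_{i:\xi(i)\ge1}\r^{l(i)},
\]
and finally replace $\sum_{i:\xi(i)\ge1}\r^{l(i)}$ by $\nu_\r(\xi)$ using $\sum\le\nu_\r$. That last replacement is valid only when the prefactor $c_\r-1-\g$ is \emph{nonnegative}; when $c_\r-1-\g<0$ (equivalently $\phi_\g(\r)<1$, which is exactly the case needed in Propositions~\ref{Prop2} and~\ref{Prop1}) the inequality goes the wrong way. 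The fix is the one the paper uses: do not weaken $-\xi(i)\r^{l(i)}$ to $-\r^{l(i)}$. Since $\1\{\xi(i)\ge1\}\xi(i)=\xi(i)$, the reset terms sum \emph{exactly} to $-\nu_\r(\xi)$, and one obtains
\[
L\nu_\r(\xi)\ =\ -(1+\g)\nu_\r(\xi)+\sum_{i:\xi(i)\ge1}\sum_{j\in\Nc_i}\wij\r^{l(j)}\ \le\ -(1+\g)\nu_\r(\xi)+c_\r\sum_{i:\xi(i)\ge1}\r^{l(i)}\ \le\ (c_\r-1-\g)\nu_\r(\xi),
\]
the last step using only $c_\r\ge 0$ and $\sum_{i:\xi(i)\ge1}\r^{l(i)}\le\nu_\r(\xi)$, which is now the correct direction.

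A minor side remark: your claim that $\lbd\ge w_1+w_2(d-1)$ is backwards. From the definitions one has $\sum_{j\in\Nc_i}\wij\le w_1+(d-1)w_2$ for every $i$, hence $\lbd\le w_1+(d-1)w_2$. This does not damage the proof, since for $\r=1$ one should simply use the direct bound $\sum_{j\in\Nc_i}\wij\le\lbd$ rather than pass through $w_1+w_2(d-1)$; but your parenthetical about which constant is ``sharper'' is inverted.
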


\begin{Rmk}\label{Rmk1}
	Observe that for any fixed $\g$, $\phi_\g(\r)$ admits a unique global minimum, which is attained at point $\r_*=\sqrt{\nicefrac{w_1}{w_2(d-1)}}$.
\end{Rmk}

\begin{proof}[Proof of Lemma~\ref{Lem2}]
	Recall the definitions~\eqref{Map1} and~\eqref{Map2}. From the definition~\eqref{G}, we have that
	\begin{align}\label{IL}
	\begin{aligned}
	L \nu_\r(\xi)&=-\g \sum_{i\in V} \xi(i)\r^{l(i)}+\sum_{i\in V}\1\{\xi(i)\geq 1\} \left(\sum_{j\in\Nc_i} \wij\r^{l(j)}-\xi(i)\r^{l(i)}\right)\\
	&\leq -(1+\g) \nu_\r(\xi)+\sum_{i\in V}\xi(i)\sum_{j\in\Nc_i} \wij\r^{l(j)}
	\end{aligned}
	\end{align}
	where the inequality follows from the fact that $\1\{\xi(i)\geq 1\} \leq \xi(i)$, for all $i\in V$. Now, by setting
	\begin{align*}
	\begin{aligned}
	c_\r &= \lbd,\quad&&\text{if }\r=1;\\
	&= w_1 \r^{-1}+ w_2(d-1)\r,\quad&&\text{if }\r\neq 1,
	\end{aligned}
	\end{align*}
	note that $\sum_{j\in\Nc_i} \wij\r^{l(j)} \leq c_\r \r^{l(i)}$, for all $i\in V$. Thus,
	\begin{align*}
	\begin{aligned}
	L \nu_\r(\xi)\leq \left(c_\r -1-\g\right)\nu_\r(\xi),
	\end{aligned}
	\end{align*}
	and by classical results on Markovian generators,
	\begin{align*}
	\begin{aligned}
	\frac{d}{dt} \E_w(\nu_\r(\xit^\xi))\leq \left(c_\r-1-\g\right)\E(\nu_\r(\xit^\xi)),
	\end{aligned}
	\end{align*}
	so that~\eqref{MI} is now just a matter of applying Grönwall's lemma in the inequality above.
\end{proof}

From Lemma~\ref{Lem2} we readily have that
\begin{Pro}\label{Prop2}
	\begin{align}\label{IThm}
	\E_w(|\xis|)\leq \exp\left(\lbd-1-\g\right)^t. 
	\end{align}
	In particular, $\xis$ dies out for any $\g\geq \lbd-1$. Therefore, $\g_g\leq \lbd-1$.
\end{Pro}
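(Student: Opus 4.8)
The plan is to read the proposition off from Lemma~\ref{Lem2} by specializing to the weighting function $\nu_1$. I would take $\r=1$ and $\e=e_o$ in~\eqref{MI}. Since $\nu_1(\xi)=\sum_{i\in V}\xi(i)=|\xi|$ for every $\xi\in\S$, since the root sits at level $0$ so that $\nu_1(e_o)=e_o(o)\,\r^{l(o)}=1$, and since $\phi_\g(1)=\exp(\lbd-1-\g)$, the inequality~\eqref{MI} becomes
\begin{align*}
\E_w(|\xis|)=\E_w(\nu_1(\xis))\leq \phi_\g(1)^t\,\nu_1(e_o)=\exp(\lbd-1-\g)^t,
\end{align*}
which is precisely~\eqref{IThm}.

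Next I would convert this into extinction. If $\g>\lbd-1$, then $\exp(\lbd-1-\g)^t\to 0$ as $t\to\infty$, so Markov's inequality gives $\p_w(|\xi_n^{e_o}|\geq 1)\leq\E_w(|\xi_n^{e_o}|)\to 0$ along $n\in\N$. Since the event of global survival is contained in $\{|\xi_n^{e_o}|\geq 1\}$ for every $n$, we obtain $\p_w(|\xis|\geq 1,\ \forall t\geq 0)\leq\inf_{n}\p_w(|\xi_n^{e_o}|\geq 1)=0$, so $\xis$ dies out for all $\g>\lbd-1$, and hence $\g_g\leq\lbd-1$ directly from the definition of $\g_g$ as a supremum. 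To cover also the boundary value $\g=\lbd-1$ (where the estimate degenerates to $\E_w(|\xis|)\leq 1$) I would use the supermartingale structure: the computation in the proof of Lemma~\ref{Lem2} yields $L\nu_1\leq(\lbd-1-\g)\nu_1=0$ on $\S$, so $|\xis|$ is a non-negative supermartingale (integrability coming from the uniform bound $\E_w(|\xis|)\leq 1$); it therefore converges $\p_w$-a.s.\ to a $\Z_+$-valued limit, i.e.\ $|\xis|$ is eventually constant. If that limiting value were some $k\geq 1$, then, because $\g=\lbd-1\geq 1>0$, a leakage event would occur at the positive rate $\g k$ and strictly decrease $|\xis|$, contradicting eventual constancy; thus the limit is $0$ a.s.\ and $\xis$ dies out here as well.

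I do not expect a real obstacle: the computational content is entirely contained in the already-proven Lemma~\ref{Lem2}, and the step from a vanishing expectation to a vanishing survival probability is routine. The only mildly delicate point is the boundary case $\g=\lbd-1$, treated above through the supermartingale together with the observation that on $\{|\xis|\geq 1\}$ the leakage clock still fires at a positive rate; and even this can be skipped if one only wants the stated bound $\g_g\leq\lbd-1$, since extinction for all $\g>\lbd-1$ already forces it.
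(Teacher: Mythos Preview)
Your argument is correct and, for the inequality~\eqref{IThm} and the case $\g>\lbd-1$, identical to the paper's: both specialize Lemma~\ref{Lem2} to $\r=1$, $\e=e_o$, and then apply Markov's inequality.

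The only difference lies in how the boundary value $\g=\lbd-1$ is handled. The paper invokes the auxiliary Lemma~\ref{Lem1}, which says that on the survival event $|\xis|\to\infty$ almost surely; combined with the uniform bound $\E_w(|\xis|)\leq 1$, this forces the survival probability to vanish. You instead exploit the supermartingale structure directly: since $L\nu_1\leq 0$ at this parameter value, $|\xis|$ is a non-negative integer-valued supermartingale, hence converges and is eventually constant, and any constant value $k\geq 1$ is ruled out because the leak clock (of rate $\g k>0$, using $\lbd\geq 2$) would then almost surely produce a jump of size $-1$. Both routes rest on the same mechanism --- a strictly positive leakage rate whenever $|\xis|\geq 1$ --- but your version is self-contained and avoids the detour through Lemma~\ref{Lem1}, while the paper's version isolates the ``survival implies blow-up'' statement as a reusable fact. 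Your closing remark that extinction for all $\g>\lbd-1$ already yields $\g_g\leq\lbd-1$ is also correct, though the proposition asserts extinction \emph{at} $\g=\lbd-1$ as well, so the boundary case cannot be omitted entirely.
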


\begin{proof}
	First, note that~\eqref{IThm} is just inequality~\eqref{MI} set up with $\xi= e_o$ and $\r=1$. Now, let us assume that $\g>\lbd-1$. Since in this case $\E_w(|\xis|)\to 0$ as $t\to \infty$, the classical Markov inequality readily implies
	\begin{align}
	\p_w\left(|\xis|\geq 1,\;\forall t\geq 0\right)=\lim_{t\to \infty} \p_w\left(|\xis|\geq 1\right)=0.
	\end{align}
	Therefore, $\xis$ dies out for any $\g>\lbd-1$, and the upper bound in~\eqref{ub2} is proven. It remains now to show that our process also dies out at point $\g=\lbd-1$. Indeed, since~\eqref{IThm} implies
	\begin{align}\label{c1}
	\lim_{t\to\infty} \E_w\left(|\xis|\right)\leq 1,
	\end{align}
	if $\xis$ survives at $\g=\lbd-1$, then Lemma~\ref{Lem1} in Appendix~\ref{AP1} would imply that (almost surely) $|\xis|>1$ for all large $t$, hence a contradiction. Therefore $\xis$ dies out at $\g=\lbd-1$.
\end{proof}

\begin{Rmk}
	We observe that Proposition~\ref{Prop2} still holds if, instead of a homogeneous tree, the network of neurons is a connected graph of bounded degree. This is so because the argument used to establish inequality~\eqref{MI}, when $\r=1$, requires only that $\lbd<\infty$ and, by definition, this condition holds if the graph representing the network of neurons has a bounded degree.   
\end{Rmk}

Our next step, in proving Theorem~\ref{Thm2}, concerns the critical point $\g_l$. We are going to prove that
\begin{Pro}\label{Prop1}
$\g_l\leq 2\sqrt{w_1 w_2(d-1)}-1$.
\end{Pro}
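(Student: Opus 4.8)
The goal is to prove that $\xis$ does not survive locally whenever $\g > 2\sqrt{w_1w_2(d-1)}-1$; by the definition of $\g_l$ this yields $\g_l \le 2\sqrt{w_1w_2(d-1)}-1$. The engine is Lemma~\ref{Lem2} evaluated at the optimal weighting parameter $\r = \r_* = \sqrt{w_1/(w_2(d-1))}$ of Remark~\ref{Rmk1}: since $w_1\r_*^{-1} = w_2(d-1)\r_* = \sqrt{w_1w_2(d-1)}$, one gets
\[
\theta := \phi_\g(\r_*) = \exp\bigl(2\sqrt{w_1w_2(d-1)}-1-\g\bigr),
\]
which is $<1$ exactly in the range $\g > 2\sqrt{w_1w_2(d-1)}-1$ (in the degenerate case $\r_*=1$ one uses the first branch of $\phi_\g$ together with $\lbd \le w_1+(d-1)w_2 = 2\sqrt{w_1w_2(d-1)}$, which holds on $\T_d$ since each neuron has one parent and $d-1$ children). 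Because $l(o)=0$ we have $\nu_{\r_*}(e_o)=1$, so Lemma~\ref{Lem2} with $\e=e_o$ gives $\r_*^{\,l(j)}\E_w(\xis(j)) \le \E_w(\nu_{\r_*}(\xis)) \le \theta^t$ for every neuron $j$, and in particular $\p_w(\xis(j)\ge 1) \le \r_*^{-l(j)}\theta^t$.

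The second step is to deduce that, almost surely, neuron $o$ receives only finitely many spikes from its neighbours. For $j\in\Nc_o$ we have $l(j)\in\{-1,+1\}$, so $\r_*^{-l(j)} \le \max\{\r_*,\r_*^{-1}\}=:C<\infty$. The number of spikes emitted by $j$ over $[0,\infty)$ is at most the number of points of $N_j^*$ falling in the predictable set $\{t:\xit^{e_o}(j)\ge 1\}$, whose expectation, by the rate-$1$ compensator of $N_j^*$, equals $\int_0^\infty \p_w(\xis(j)\ge 1)\,dt \le C\int_0^\infty \theta^t\,dt = C/(-\log\theta)<\infty$ since $\theta<1$. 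Summing over the $d$ neighbours of $o$, the total number of spikes ever received by $o$ has finite expectation and is therefore finite almost surely.

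On that almost sure event, let $T$ be the time of the last spike received by $o$ (take $T=0$ if there is none). Once $t$ exceeds $T$, the value $\xit^{e_o}(o)$ changes only through leakage events of $o$ (which decrease it by one) or spikes of $o$ (which reset it to $0$); it never increases. A non-increasing $\Z_+$-valued path is eventually constant, say equal to $c\ge 0$; if $c\ge 1$, then $o$ is active for all large $t$, so the next arrival of $N_o^*$ — which occurs almost surely — resets its potential to $0$, contradicting $c\ge 1$. Hence $c=0$, i.e. $\xit^{e_o}(o)=0$ for all large $t$, so $\p_w(\xis(o)\ge 1 \text{ i.o.})=0$. As this holds for every $\g > 2\sqrt{w_1w_2(d-1)}-1$, we conclude $\g_l \le 2\sqrt{w_1w_2(d-1)}-1$.

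A word on where the difficulty lies. Lemma~\ref{Lem2} controls the process only in mean, and the real content is the upgrade to the almost sure ``infinitely often'' statement, which must in particular exclude the scenario in which $o$ is active at arbitrarily large but sparse instants. The monotonicity argument above does exactly this, but it only becomes available once one knows that the supply of potential to $o$ from its neighbours dries up — this is precisely why the weighting function must be controlled not only at the root but at the neighbouring levels $\pm 1$, and why the finiteness and positivity of $\r_*$ (equivalently $\nu_{\r_*}(e_o)<\infty$) are what make the scheme work.
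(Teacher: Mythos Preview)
Your proof is correct, but it takes a different route from the paper's. The paper sets $M_t = \nu_{\r_*}(\xis)/\phi_\g(\r_*)^t$ and uses the Markov property together with Lemma~\ref{Lem2} to show that $(M_t)$ is a non-negative supermartingale whenever $\g > 2\sqrt{w_1w_2(d-1)}-1$; almost sure convergence of $M_t$, combined with $\phi_\g(\r_*)^t\to 0$, forces $\nu_{\r_*}(\xis)\to 0$ a.s., and since $\xis(o)\le \nu_{\r_*}(\xis)$ this kills local survival directly.

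By contrast, you use only the \emph{unconditional} expectation bound from Lemma~\ref{Lem2} (no Markov property, no supermartingale convergence), pass through the rate-1 compensator to bound the expected total number of spikes emitted by each neighbour of $o$, and finish with a path-wise monotonicity argument after the last incoming spike. What your approach buys is a slightly more elementary toolkit (Fubini and Markov's inequality in place of the supermartingale convergence theorem) and the extra information that $o$ receives finitely many spikes in expectation; what the paper's approach buys is brevity and the stronger almost sure statement $\nu_{\r_*}(\xis)\to 0$, which in one stroke shows that \emph{every} fixed neuron's potential eventually vanishes, not just the root's. Your handling of the borderline case $\r_*=1$ via $\lbd\le w_1+(d-1)w_2$ is a nice touch the paper omits.
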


\begin{proof}
	Suppose $\g>2\sqrt{w_1 w_2(d-1)}-1$, and set
	\begin{align*}
	M_t=\frac{\nu_{\r_*}(\xis)}{\left(\phi_\g(\r_*)\right)^t},
	\end{align*}
	where $\r_*^2=\nicefrac{w_1}{w_2(d-1)}$. Let $\Fc_t=\sigma(\xi_s^{e_o}:s\leq t)$. By the Markov property and inequality~\eqref{MI} we may find that
	\begin{align}\label{eq1}
	\E_w\left(\nu_{\r_*}(\xi_{t+s}^{e_o})\biggr\rvert \Fc_t\right)=\E_w\left(\nu_{\r_*}(\xi_s^{\xi_t^{e_o}})\right)\leq \left(\phi_\g(\r_*)\right)^s \nu_{\r_*}(\xi_t^{e_o}), \quad \forall s,t\geq 0.
	\end{align}
	Using now the fact that $\phi_\g(\r_*)<1$ for any $\g>2\sqrt{w_1 w_2(d-1)}-1$, we readily get from~\eqref{eq1} that $M_t$ is a supermartingale and, since it is non-negative, it converges almost surely (see e.g.,~\cite{Chung2001}, p. 351). This, together with the fact that $\left(\phi_\g(\r_*)\right)^t\to 0$ as $t\to \infty$, implies that $\nu_{\r_*}(\xis)\to 0$ almost surely as $t\to \infty$, and so $\xis$ cannot survive locally. Therefore, $\g_l\leq 2\sqrt{w_1 w_2(d-1)}-1$.
\end{proof}

Having obtained the upper bounds for $\g_l$ and $\g_g$, deducing the lower bounds remains as the final step. We start by showing that $\g_g\geq d-2$. To do so, we consider an auxiliary Markov process $\et=(\et(i), i\in V)$ on $\T_d$ taking values in $\{0,1\}^V$ that is defined as follows. Regarding neurons at state 1 as {\it active}, when they have membrane potential larger than 0; and neurons at state 0 as {\it quiescent}, when they have null membrane potential; the process evolves according to the following rules. Active neurons spike --- they become quiescent and at the same time make all of their neighboring neurons active --- at rate 1, or they become spontaneously quiescent at rate $\g$. Quiescent neurons become active only if, at least one of their active neighboring neurons, if there are any, spikes. We note that this process was first considered by Ferrari et al. in~\cite{Ferrari2018} (see Section 3 therein). In what follows, we shall denote by $\et^\e$ the process $\et$ starting from $\eta_0=\e\in \{0,1\}^V $.

We observe that the process $\et$ can be seen as a version of our model with the same spiking and leakage parameters, but in which we do not allow neurons to accumulate more than one unit of potential. This nice point of view is key in our argument since it leads to a monotone coupling that will be needed in the next proposition: For fixed $w$, using the graphical representation, we can construct $\et$ and $\xit$ simultaneously, but for the process $\et$ we use only the Poisson processes $N_i^*$ and $N_{i,1}^{\dagger}$, in such a way that
\begin{align}\label{d1}
\e(i)\leq \xi(i), \forall i\in V \quad \mbox{implies}\quad \et^\e(i)\leq \xit^\xi(i), \forall i\in V.
\end{align}

With this property, we now can prove that
\begin{Pro}\label{Prop3}
$\g_g \geq d-2$.
\end{Pro}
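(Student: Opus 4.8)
The plan is to exploit the monotone coupling in~\eqref{d1}: since $e_o(i)\leq \xi_0^{e_o}(i)$ trivially, we have $\eta_t^{e_o}(i)\leq \xi_t^{e_o}(i)$ for all $i$ and $t$, hence it suffices to show that the reduced process $\eta_t^{e_o}$ survives globally for every $\g<d-2$; this will give $\g_g\geq d-2$. So from now on I work entirely with the $\{0,1\}$-valued process $\eta_t$ started from a single active neuron at the root.

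The key idea is a \emph{comparison with a (non-spatial) branching process}, using the fact that $\T_d$ has no cycles. First I would set up an exploration of the genealogy of activations. Informally: when the root spikes for the first time it makes its $d$ neighbours active; each of those, before being hit again along the edge back toward the root, behaves like an independent copy of a one-sided version of the dynamics, because the tree structure guarantees that the subtree hanging off each child is only ever influenced through that single connecting edge. More precisely, I would define a discrete-generation process where ``individuals'' are activation events propagating \emph{away} from the root, generation $n$ consisting of activations at level $n$. When an active neuron at level $n$ spikes, it activates its $d-1$ children (and its parent, which we ignore — that edge points back toward the root). Each active neuron is racing two independent exponential clocks: a rate-$1$ spike clock and a rate-$\g$ leakage clock. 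It successfully ``reproduces'' (spikes, thereby producing $d-1$ offspring in the next generation) with probability exactly $\tfrac{1}{1+\g}$, and dies without offspring with probability $\tfrac{\g}{1+\g}$. The only subtlety is that a child could already have been made active by something else before its parent spikes; but because we only count activations moving outward and the graph is a tree, each level-$(n{+}1)$ neuron has a unique level-$n$ parent, so there is no double counting — and anyway, a neuron being activated early only helps survival, so ignoring it gives a stochastic lower bound. I would make this precise by describing a coupling in which the outward-activation process dominates a Galton--Watson process with offspring law: $0$ offspring with probability $\tfrac{\g}{1+\g}$ and $d-1$ offspring with probability $\tfrac{1}{1+\g}$.

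With that domination in hand, the conclusion is immediate from classical branching process theory: a Galton--Watson process survives with positive probability iff its mean offspring number exceeds $1$, i.e. iff $\tfrac{d-1}{1+\g}>1$, equivalently $\g<d-2$. Survival of the branching process forces infinitely many activation events somewhere in the system for all time, hence $|\eta_t^{e_o}|\geq 1$ for all $t\geq 0$ on that event, which has positive probability; by the coupling the same holds for $|\xi_t^{e_o}|$. Therefore $\p_w(|\xi_t^{e_o,\g}|\geq 1,\ \forall t\geq 0)>0$ for every $\g<d-2$, which by definition of $\g_g$ gives $\g_g\geq d-2$.

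The main obstacle is making the branching comparison rigorous rather than merely heuristic: one must be careful that the ``independent exponential races'' really do decouple across different subtrees. The clean way is to build everything from the graphical representation: fix a neuron $i$ at level $n$ that becomes active at some stopping time; on the time interval until it next spikes or leaks, the Poisson clocks $N_j^*, N_{j,1}^\dg$ for $j$ in the subtree beyond $i$ are independent of the past, and the event ``$i$ spikes before it leaks'' depends only on $N_i^*$ and $N_{i,1}^\dg$ restricted to that interval. One then proceeds generation by generation, at each step revealing only the relevant clocks, and checks that the resulting branching structure is genuinely i.i.d.\ with the stated offspring law — or, to be safe, that it \emph{dominates} such a Galton--Watson process, since a neuron that has been activated extra early, or that has accumulated potential and so survives a leak event in the true process, can only help. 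I would also invoke Lemma~\ref{Lem5}-style monotonicity where convenient, but the core of the argument is this graphical-representation decoupling on the tree.
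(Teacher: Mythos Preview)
Your proposal is correct and follows essentially the same route as the paper: reduce to the $\{0,1\}$-valued process $\eta_t^{e_o}$ via the coupling~\eqref{d1}, then dominate it from below by an ``outward-only'' process on the tree in which each active neuron either leaks (probability $\tfrac{\g}{1+\g}$) or spikes and activates its $d-1$ children (probability $\tfrac{1}{1+\g}$), and observe that this is a branching process surviving precisely when $\tfrac{d-1}{1+\g}>1$. The paper packages the outward-only process as an explicit auxiliary process $\zeta_t$ with $|\zeta_t|\leq |\eta_t^{e_o}|$, but the content is identical to your genealogical exploration.
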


\begin{proof}
	For fixed $w$,~\eqref{d1} readily implies that
	\begin{align}
	\p\left(|\ets|\geq 1\right) \leq \p_w\left(|\xis|\geq 1\right)
	\end{align}
	for any $t\geq 0$. Hence,
	\begin{align*}
	\p\left(|\ets|\geq 1,\; \forall t\geq 0\right)&=\lim_{t\to \infty} \p\left(|\ets|\geq 1 \right) \\
	&\leq \lim_{t\to \infty} \p_w\left(|\xis|\geq 1\right)=\p_w\left(|\xis|\geq 1,\; \forall t\geq 0\right).
	\end{align*}
	Therefore, it is enough to show that the process $\ets$ survives whenever $\g<d-2$. We do this as follows. Let $\zeta_t$ be a process on $\T_d$ that evolves according the following rules. Start with only one active neuron, the root. It waits a mean 1 exponential time at the end of which it spikes, becoming quiescent and simultaneously making all of its neighbors, except its parent, active. In general, each new active neuron, when it spikes, will make active all of its neighboring neurons, except the parent one. This restriction is to ensure that once a neuron is active, and then becomes quiescent, it will never become active again. Neurons become quiescent if they spike or when the leakage occurs, the latter happening at the rate $\g$, independently.
	
	From the rules above, it is not difficult to see that we can construct $\zeta_t$ and $\ets$ simultaneously, in such a way that $|\zeta_t|\leq |\ets|$ for all $t\geq 0$. Furthermore, since $\T_d$ have no cycles, $|\zeta_t|$, which is just the number of active neurons in $\zeta_t$ at time $t$, defines a continuous-time (non-spatial) branching process with offspring distribution given by $p(0)=\frac{\g}{1+\g}$ and $p(d-1)=\frac{1}{1+\g}$. Now, as it is well-known, branching processes survive if and only if the mean offspring distribution is strictly greater than $1$. Thus, if $\frac{d-1}{1+\g}>1$ or, equivalently, $\g<d-2$, we have that the process $|\zeta_t|$ survives, and so does $|\ets|$. Therefore, $\g_g\geq d-2$.
\end{proof}

\begin{Rmk}
	Given Proposition~\ref{Prop2}, using the coupling argument establishing Proposition~\ref{Prop3}, one may prove the following assertion: For any $d\geq 2$,
	\begin{align}\label{E1}
	\exp\left(d-2-\g\right)^t \leq \E_w(|\xis|) \leq \exp\left(\lbd-1-\g\right)^t. 
	\end{align}
\end{Rmk}

The following concludes the proof of Theorem~\ref{Thm2}.

\begin{Pro}\label{prop4}
For all $d\geq 2$, we have $\g_l>0$.
\end{Pro}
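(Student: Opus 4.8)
The plan is to exhibit one value $\g>0$ at which $\xis$ survives locally; since $\g_l$ is by definition the supremum of the set of such $\g$, this already yields $\g_l>0$. The first move is a reduction: by the monotone coupling~\eqref{d1} (applied with $\e=\xi=e_o$) we have $\ets(o)\le \xis(o)$ for every $t\ge 0$, so $\{\ets(o)=1\ \text{i.o.}\}\subseteq\{\xis(o)\ge 1\ \text{i.o.}\}$ and it is enough to prove that the $\{0,1\}$-valued process $\ets$ survives locally for some small $\g$; this removes the unboundedness of the membrane potentials from the picture. Furthermore, $\T_2=\Z$ embeds in $\T_d$ as a bi-infinite self-avoiding path through the root $o$, and if in the graphical construction of $\et$ on $\T_d$ we keep only the clocks $N_v^*$ and $N_{v,1}^{\dagger}$ of the vertices $v$ on this path and let spikes be delivered only along its edges, the resulting process lies below $\ets$ everywhere and coincides, along the path, with the analogous process on $\Z$ started from $e_o$. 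Hence it suffices to treat $d=2$, i.e.\ to show that on $\Z$ one has $\p(\ets(o)=1\ \text{i.o.})>0$ for $\g$ small.

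\textbf{Core argument on $\Z$.} For $\g=0$ local survival is elementary: following only the edge $\{o,v\}$ for a fixed neighbor $v$ of $o$, in the absence of leakage the activity, once present on this edge, oscillates between its two endpoints forever --- each spike of one endpoint re-activates the other --- so $\ets(o)=1$ on infinitely many disjoint time intervals, almost surely. For small $\g>0$ I would run a renormalization: fix space and time scales $L$ and $T$ and declare a space--time block \emph{good} when the spike and leakage clocks inside it make the dynamics on a chosen ``backbone'' of sites behave as in the $\g=0$ case throughout the block (in particular no leakage reaches the backbone during the block, and the relevant spikes do occur). A good block has probability tending to $1$ as $\g\to 0$ with $L,T$ fixed; the dynamics of $\et$ is such that a good block receiving activity on one side both transmits it to the next block and returns activity to its own spatial column; and the good-block indicator field stochastically dominates a supercritical oriented percolation once $L,T$ are chosen suitably and then $\g$ is taken small enough. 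On the (positive-probability) percolation event the column over $o$ is wet at infinitely many renormalized levels, which forces $\ets(o)=1$ at an unbounded set of times; hence $\g_l>0$.

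\textbf{Main obstacle.} The delicate point is the passage from the single active site $e_o$ to a configuration from which the percolation comparison can be started: one must show that for small $\g$ the process from $e_o$ reaches, with probability bounded away from $0$, a state rich enough to seed the renormalized field; and the ``good block'' event has to be simultaneously very likely as $\g\to 0$, strong enough both to propagate activity and to steer it back into the origin's column (so that one obtains \emph{local}, not merely global, survival), and compatible with the independence of the events attached to different blocks. Balancing these requirements --- equivalently, checking that the local-survival analysis of Ferrari et al.~\cite{Ferrari2018} for the process $\et$ applies to the single-site initial condition, which is precisely this same issue --- is where the substance of the proof lies.
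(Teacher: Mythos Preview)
Your proposal is correct and follows essentially the same route as the paper: reduce to $\Z$, then run a block--renormalization comparison with supercritical $1$-dependent oriented percolation, using that for small $\g$ a space--time box contains no leak marks with high probability while the leak-free dynamics spreads linearly (the paper makes this precise via the rightmost/leftmost active neuron and the SLLN), and finally pass from the single seed $e_o$ to a ``good'' seeded interval by a finite-time positive-probability event and the Markov property --- exactly the obstacle you flag. The only cosmetic difference is that you first drop to the dominated $\{0,1\}$-valued process $\ets$ before restricting to $\Z$, whereas the paper works with $\xit$ directly; since the paper's ``open block'' event already forces the absence of all leak marks in the box, the two arguments coincide in substance.
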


\begin{proof}
	We start by pointing out that, since $\T_d$ contains a copy of $\Z$, given $w$, a simple comparison argument gives that, for fixed $\g$, $\xis$ on $\T_d$ dominates $\xit^{e_0}$ on $\Z$, in the sense that we may construct both processes on the same probability space such that one is larger than the other. This implies that $\g_l(\T_d)\geq \g_l(\Z)$, and so, it is enough to show that $\g_l(\Z)>0$. This follows by showing that $\p\left(\xit^{e_0,\g}(0) \text{ i.o.}\right)>0$, provided $\g$ is small enough.
	
	We apply a general method developed by Bramson and Durrett~\cite{Durret1988}. The idea is proving that for any $\eps>0$ and $\g$ small enough, the process $\xit^{e_0}$ dominates a 1-dependent oriented percolation having open sites with probability $1-\eps$. To get to that, our first step is to introduce the 1-dependent oriented percolation (see~\cite{Durret1984} for details). Let
	\begin{align*}
	\l=\left\{(m,n)\in\Z^2:m+n\text{ is even}\right\},
	\end{align*}
	and for $(m,n)\in\l$, let $U(m,n)$ be Bernoulli random variables (r.v.'s) with parameter $p$ to indicate whether $(m,n)$ is open or not. Make these r.v.'s 1-dependent, that is, if $z_1,\ldots,z_r$ are points of $\l$ with $\|z_k-z_l\|_{\l}>1$ for all $1\leq k\neq l\leq r$, where $\|(m,n)\|_{\l}=\frac 12(|m|+|n|)$, then $U(z_1),\ldots,U(z_r)$ are independent. We say that there is an open path from $(i,0)$ to $(j,n)$ if there is a sequence of points $i=m_0,m_1,\ldots,m_n=j$ such that $|m_k-m_{k+1}|=1$ for $0\leq k< n$, and $(m_k,k)$ is open for $0\leq k\leq n$. Let $C_n^0=\left\{j:\text{there is an open path from $(0,0)$ to $(j,n)$}\right\}$. We think $C_n^0$ as the set of open sites at time $n$ when the root (0 in this case) is open at time 0.
	
	Having defined the percolation process, we now proceed to establish a relationship between $C_n^0$ and our process $\xit^{e_0}$ to compare them. For integers $m$ and $n$, let
	\begin{align}\label{map}
	\begin{aligned}
	&I=[-N,N],\quad I_m=2mN+I\\
	&B=(-4N,4N)\times [0,T],\quad B_{m,n}=(2mN,nT)+B
	\end{aligned}
	\end{align}
	where $N\geq 1$ is an integer, and $T=cN$ for some constant $c>0$ to be specified later. Here $i+S=\{i+j:j\in S\}$. For each $m\in \Z$, we say that an interval $I_m$ is {\it good} if each quiescent neuron within $I_m$ (if there are any) is isolated, that is, its nearest-neighbors, in $I_m$, are active. Let $\xit^I$ be the process starting from a state with $I$ good. We say that $(m,n)\in\l$ is open and set $U(m,n)=1$ if $\zeta_{nT}^{I_m}$, the process $\xit^{I_m}$ starting with $I_m$ good at time $nT$, satisfies that $\zeta_t^{I_m}\times \{t\}\subset B_{m,n}$ for all $t\in \left[nT,(n+1)T\right]$, the intervals $I_{m-1}$ and $I_{m+1}$ are both good at time $(n+1)T$ and $H_{m,n}:=D\cap B_{m,n}=\varnothing$, where $D=\left\{(i,T_{i,k}^{\dg}):k\geq 1,i\in\Z\right\}$ is the set of leaking times in the graphical representation of the process.
	
	It is not difficult to see that the r.v.'s $U(m,n)$ previously defined are 1-dependent. Indeed, this follows immediately from the fact that $B$ was chosen in~\eqref{map} so that $B_{k,l}\cap B_{m,n}=\varnothing$ when $\|(k,l)-(m,n)\|_{\l}>1$. Moreover, if there is an open path from $(0,0)$ to $(m,n)$ in the percolation process, then $I_m$ is good at time $nT$, and all active neurons within $I_m$ are also active in $\xi_{nT}^{I_0}$. As a result, the process $\xit^I$ starting with $I$ good dominates a 1-dependent oriented percolation having open sites with probability $\p\left(U(m,n)=1\right)=p$. If that $p$ is large, known results (see Durrett~\cite{Durret1984}, Section 10) show that $(0,0)$ is in an infinite open cluster containing infinitely many vertices such as $(0,2n)$. That implies local survival for the process $\xit^I$, by the coupling.
	
	We now argue that for any $\eps >0$ we can find $N$ large enough and then $\g>0$ small enough so that
	\begin{align}\label{j0}
	\p\left(U(m,n)=1\right)\geq 1-\eps,
	\end{align}
	for any $(m,n)\in \l$. It suffices to consider $(m,n)=(0,0)$. Let $\xit^I$ be the process starting from a state with $I=[-N,N]$ good and let $\eps>0$ be arbitrary. We write
	\begin{align}\label{j1}
	\p\left(U(0,0)=0\right)\leq \p\left(H_{0,0}\neq \varnothing\right)+ \p\left(U(0,0)=0|H_{0,0}=\varnothing\right).
	\end{align}
	Let $R_t$ and $L_t$ be the rightmost and leftmost active neuron at time $t$ respectively. Conditioned on the event $\left\{H_{0,0}=\varnothing\right\}$, we have $\left\{i\in \Z:\xit^I(i)\geq 1\right\}\subset[L_t,R_t]$ for any $t\leq T$, and since there are no leaking marks in that period, the interval $[L_t,R_t]$ is good at any time $t\in [0,T]$. Moreover, $R_t$ jumps to $R_t+1$ (resp. $L_t$ jumps to $L_t-1$) at rate $1$, so that, from the strong law of large numbers, $\nicefrac{R_t}{t} \to 1$ (resp. $\nicefrac{L_t}{t} \to 1$) almost surely. Therefore, if we set $c=\nicefrac 52$ and pick $N$ large enough, $[L_T,R_T]$ will contain the intervals $I_{-1}$ and $I_1$, which must be both good, with a probability at least $1-\nicefrac{\eps}{2}$. Hence,
	\begin{align}\label{j3}
	\p\left(U(0,0)=0|H_{0,0}=\varnothing\right)\leq \frac{\eps}{2}.
	\end{align}
	It remains to control the other term on the right-hand side of~\eqref{j1}. For that sake, first, note that, for the $N$ previously picked, the total number $M$ of leaks that may occur inside $B$ is bounded above by a Poisson r.v. of parameter $\g(8N+1)T$, since the times at which neurons of $B$ spike are modeled by independent rate-$\g$ Poisson processes. Therefore,
	\begin{align}\label{j2}
	\p\left(H_{0,0}\neq \varnothing\right)\leq \E(M)\leq  2\g N(8N+1)\leq \frac{\eps}{2},
	\end{align}
	if $\g>0$ is small enough.~\eqref{j0} is now just a matter of replacing~\eqref{j3} and~\eqref{j2} in~\eqref{j1}. Therefore, the process $\xit^I$, starting with $I=[-N,N]$ good, survives locally if $\g>0$ is small. To extend this conclusion for the process $\xit^{e_0}$, we note that, since $N\geq 1$ is finite, there is a positive probability that $I$ is good by time 1. The assertion then follows by the Markov property.	
\end{proof}

\paragraph{Application.}
We conclude this section with an application of some of the results above to a particular version (on $\T_d$) of the system of spiking neurons studied by Ferrari et al.~\cite{Ferrari2018}. That particular version is essentially equivalent to the auxiliary Markov process $\et$ introduced above. Consequently, they share the same critical points for $\g$. We are going to show that $\et$ also has two phase transitions when started from $e_o$. Let $\g_1$ and $\g_2$ be the two critical values of $\ets$:
\begin{align*}
\begin{aligned}
\g_1&=\g_1(\T_d)=\sup\left\{\g: \p\left( \et^{e_o,\g}(o)\geq 1 \mbox{ i.o.}\right)>0\right\},\\
\g_2&=\g_2(\T_d)=\sup\left\{\g: \p\left(|\et^{e_o,\g}|\geq 1,\; \forall t\geq 0\right)>0\right\}.
\end{aligned}
\end{align*}
Consider the process $\xis$ having matrix $w=(\wij)_{i,j\in V}$ with entries given by $\wij=\1\{j\in\Nc_i\}$, for each $i\in V$. From~\eqref{d1} and Propositions~\ref{Prop2} and~\ref{Prop1}, we get that $\g_1 \leq \g_l\leq 2\sqrt{d-1}-1$ and $\g_2 \leq \g_g\leq d-1$. This, together with the bound $\g_2 \geq d-2$ (which follows from the coupling argument proving Proposition~\ref{Prop3}), shows that $\g_1<\g_2$ (the process $\ets$ has two phase transitions) for all $d\geq 6$, and so does the particular system (on $\T_d$ and with initial configuration $e_o$) considered by~\cite{Ferrari2018}.

\appendix
\section{Appendix}\label{AP1}
\begin{Lem}\label{Lem5}
	If $\g' < \g''$, then for any $t\geq 0$ we have
	\begin{align}
	\p_w\left(|\xit^{e_o,\g'}|\geq 1\right) \geq \p_w\left(|\xit^{e_o,\g''}|\geq 1\right).
	\end{align}
\end{Lem}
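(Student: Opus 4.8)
The plan is to construct both processes $\xit^{e_o,\g'}$ and $\xit^{e_o,\g''}$ on a common probability space via the graphical representation, in such a way that the $\g'$-process always dominates the $\g''$-process pointwise, i.e. $\xit^{e_o,\g'}(i)\geq \xit^{e_o,\g''}(i)$ for all $i\in V$ and all $t\geq 0$. Once such a coupling is in place, the inequality $\p_w(|\xit^{e_o,\g'}|\geq 1)\geq \p_w(|\xit^{e_o,\g''}|\geq 1)$ is immediate, since $|\xit^{e_o,\g'}|\geq |\xit^{e_o,\g''}|$ forces the event $\{|\xit^{e_o,\g''}|\geq 1\}$ to be contained in $\{|\xit^{e_o,\g'}|\geq 1\}$.

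For the coupling, I would use the same rate-$1$ Poisson processes $N_i^*$ for the spikes in both copies, and use a common family of leakage clocks obtained from the larger rate $\g''$: attach to each neuron $i$ and each level $k\geq 1$ a rate-$\g''$ Poisson process $N_{i,k}^{\dagger}$, and to each of its arrival times attach an independent $\mathrm{Bernoulli}(\g'/\g'')$ mark. The $\g''$-process uses every arrival of $N_{i,k}^{\dagger}$ as a leakage event (when $k\leq \xit(i)$), while the $\g'$-process uses only the arrivals carrying a Bernoulli mark equal to $1$; by thinning, the marked arrivals form a rate-$\g'$ Poisson process, so the $\g'$-copy has the correct law. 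Then I would verify by induction on the (locally finite) sequence of jump times that the domination $\xit^{e_o,\g'}\geq \xit^{e_o,\g''}$ is preserved across each type of event: across a spike of neuron $i$ (both copies reset $\xi(i)$ to $0$ and add $\wij$ to each neighbour, and the indicator that the spike actually fires is monotone in $\xi(i)$, so a fictitious spike in the smaller copy only helps); across a leakage arrival used by both copies (both decrement $\xi(i)$ by $1$, provided $k\le\xi(i)$, which again is monotone); and across a leakage arrival used only by the $\g'$-copy (which decrements the larger copy, but this is exactly the situation Lemma~\ref{Lem1}-type monotonicity arguments are built for, and one checks the ordering survives because the decrement is by a single unit at level $k\le\xit^{e_o,\g'}(i)$, hence $\xit^{e_o,\g'}(i)-1\geq\xit^{e_o,\g''}(i)$ when $\xit^{e_o,\g'}(i)>\xit^{e_o,\g''}(i)$, and nothing changes when the two were already equal since then $k\le\xit^{e_o,\g'}(i)=\xit^{e_o,\g''}(i)$ would have triggered a leak in the $\g''$-copy too).

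The one subtle point, and the place I expect the real work to be, is precisely this last bullet: an extra leakage event in the dominating process could in principle break the ordering at the neuron where it acts. The resolution is the observation just made — the coupled leakage clocks are indexed by the \emph{level} $k$ of potential, so an unmatched $\g'$-leak at index $k$ fires in the $\g'$-copy only when $k\le\xit^{e_o,\g'}(i)$; if at that moment $\xit^{e_o,\g'}(i)=\xit^{e_o,\g''}(i)$ then $k\le\xit^{e_o,\g''}(i)$ as well, but such an arrival is by construction also a leakage arrival of $N_{i,k}^{\dagger}$, hence fires in the $\g''$-copy, contradicting "unmatched"; therefore unmatched leaks only occur when $\xit^{e_o,\g'}(i)>\xit^{e_o,\g''}(i)$ strictly, and decrementing by one preserves $\geq$. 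This level-indexed graphical construction is exactly the one set up in Section~\ref{SEC2} (the processes $N_{i,k}^{\dagger}$), so the argument stays within the framework already laid out. I would close by noting that the same monotone coupling is what underlies the remark, made right before the definition of $\g_l,\g_g$, that the global survival probability is non-increasing in $\g$, so this lemma is essentially the quantitative, fixed-$t$ version of that statement.
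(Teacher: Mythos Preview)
Your proposal is correct and follows exactly the same approach as the paper: both construct the two processes on a common probability space using the same spike clocks $N_i^*$ and a single family of rate-$\g''$ leakage clocks $N_{i,k}^{\dagger}$, thinned with probability $\g'/\g''$ to drive the $\g'$-copy, and then read off the pointwise domination $\xit^{e_o,\g'}(i)\geq\xit^{e_o,\g''}(i)$. Your write-up simply spells out the case analysis (spikes, shared leaks, leaks that act in only one copy via the level condition $k\le\xi(i)$) that the paper leaves implicit; the one place your phrasing could be tightened is that by construction there are no ``unmatched $\g'$-leaks'' at the level of Poisson arrivals --- what you are really analysing is a \emph{marked} arrival for which $k\le\xit^{e_o,\g'}(i)$ but $k>\xit^{e_o,\g''}(i)$, and your resolution of that case is correct.
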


\begin{proof}
	It is enough to construct the processes $\xit^{e_o,\g'}$ and $\xit^{e_o,\g''}$ simultaneously in such a way that 
	\begin{align}\label{eqP1}
	\xit^{e_o,\g''}(i)\leq \xit^{e_o,\g'}(i), \quad \forall i\in V, \forall t\geq 0.
	\end{align}
	To achieve this, we proceed as follows. First, we define both processes using the same Poisson processes $N_i^*$ of rate 1. Then, we consider independent Poisson processes $N_{i,k}^{\dagger,\g''}$ of rate $\g''$ for $k\geq 1$ to give the leaking times for both processes, and we introduce an ``acceptance probability'' equal to $\nicefrac{\g'}{\g''}$, to obtain the right leakage rate for $\xit^{e_o,\g'}$: with probability $\nicefrac{\g'}{\g''}$ an event time of $N_{i,k}^{\dagger,\g''}$ is accepted, and the leakage that occurs in $\xit^{e_o,\g''}$ may also occur in $\xit^{e_o,\g'}$.
\end{proof}

\begin{Lem}\label{Lem1}
	On $\O_{\infty}:=\left\{|\xis|\geq 1, \,\forall t\geq 0\right\}$, we have that $|\xis|\to \infty$ almost surely.
\end{Lem}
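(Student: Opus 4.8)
The plan is to prove the contrapositive by a regeneration argument: if $|\xis|$ fails to tend to infinity on $\O_\infty$, then the process returns to configurations of small total potential at arbitrarily large times, and from any such configuration it has a uniformly positive chance of going extinct within one unit of time --- which is impossible on $\O_\infty$. I would restrict to $\g>0$ throughout; for $\g=0$ the process can never reach the empty configuration, so $\O_\infty$ has full probability and the statement needs a separate, more elementary argument (this case is not used anywhere in the paper, where Lemma~\ref{Lem1} is invoked only with $\g=\lbd-1\geq 1$). Since $|\xis|$ is $\Z_+$-valued, it then suffices to show, for each $L\in\N$, that $\p_w(E_L\cap\O_\infty)=0$, where $E_L=\{\xis\in\A_L\text{ for arbitrarily large }t\}$ and $\A_L=\{\xi\in\S:1\leq|\xi|\leq L\}$; indeed $\{|\xis|\not\to\infty\}\cap\O_\infty=\bigcup_{L}(E_L\cap\O_\infty)$ because on $\O_\infty$ one has $|\xis|\geq 1$ for all $t$.

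\emph{Step 1 (uniform extinction estimate).} The first ingredient is the following: there is a constant $\dt_L>0$, depending only on $L$ and $\g$, such that
\begin{align*}
\p_w\!\left(\xi_s^{\xi}=\mathbf{0}\ \text{for some}\ s\leq 1\right)\geq\dt_L\qquad\text{for every }\xi\in\A_L,
\end{align*}
where $\mathbf{0}$ is the (absorbing) configuration with all coordinates equal to $0$. A configuration in $\A_L$ has at most $L$ active neurons and total potential at most $L$; consider the event that, during $[0,1]$, none of the (at most $L$) spiking clocks $N_i^*$ of currently active neurons rings and, at the same time, the (at most $L$) leakage decrements needed to bring every coordinate down to $0$ are all completed. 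On this event no spike occurs, so the set of active neurons only shrinks and the process is driven to $\mathbf{0}$ by time $1$. Since the two requirements involve disjoint families of independent Poisson clocks, and the numbers of clocks and of decrements involved are at most $L$, their joint probability is bounded below by a positive constant $\dt_L$ depending only on $L$ and $\g$. This is the only place where $\g>0$ is used.

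\emph{Step 2 (conclusion).} Fix $L$ and set $\t_0=0$ and $\t_{k+1}=\inf\{t>\t_k+1:\xis\in\A_L\}$; these are stopping times, and on $\{\t_k<\infty\}$ one has $\xi_{\t_k}^{e_o}\in\A_L$ by right-continuity of the (piecewise constant) paths. On $E_L\cap\O_\infty$ every $\t_k$ is finite. Let $G_k$ be the event that $\xis$ reaches $\mathbf{0}$ during $(\t_k,\t_k+1]$. By the strong Markov property and Step~1, $\p_w(G_k\mid\Fc_{\t_k})\geq\dt_L$ on $\{\t_k<\infty\}$. Because $\mathbf{0}$ is absorbing, $G_k$ is incompatible with $\O_\infty$ and also forces $\t_{k+1}=\infty$, so $\{\t_{k+1}<\infty\}\cap\{\t_k<\infty\}\subseteq\{\t_k<\infty\}\setminus G_k$ and hence
\begin{align*}
\p_w(\t_{k+1}<\infty)\leq\E_w\!\left[\1\{\t_k<\infty\}\bigl(1-\p_w(G_k\mid\Fc_{\t_k})\bigr)\right]\leq(1-\dt_L)\,\p_w(\t_k<\infty).
\end{align*}
Iterating from $\t_0=0$ gives $\p_w(\t_k<\infty)\leq(1-\dt_L)^k\to 0$, so $\p_w(\bigcap_k\{\t_k<\infty\})=0$; since $E_L\cap\O_\infty\subseteq\bigcap_k\{\t_k<\infty\}$ we obtain $\p_w(E_L\cap\O_\infty)=0$, and the union over $L$ completes the proof.

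The step I expect to be the main obstacle is Step~1: one must verify that the lower bound $\dt_L$ on the extinction probability really is uniform over $\A_L$ and does not deteriorate as the at most $L$ units of potential are spread farther and farther apart on the infinite tree. The resolution is that, in the extinction scenario above, both the number of active neurons and the total potential remain bounded by $L$ throughout, so all the relevant clock rates are uniformly controlled and only $L$ and $\g$ enter the bound --- the geometry of $\T_d$ plays no role here. The remaining bookkeeping in Step~2 (that the $\t_k$ are stopping times landing in $\A_L$ and that extinction during $(\t_k,\t_k+1]$ precludes $\t_{k+1}<\infty$) is routine.
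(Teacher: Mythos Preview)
Your argument is correct and follows essentially the same route as the paper's proof, which simply cites Lemma~2.1 of \cite{Bramson1991} together with the key uniform extinction estimate that, if $|\xis|\leq N$, then all potential is lost before any spike with probability at least $\bigl(\g/(1+\g)\bigr)^{N}$. Your Step~1 is just a slightly different packaging of that same estimate (fixed time horizon instead of ``before the next spike''), and your Step~2 spells out the standard strong-Markov/regeneration argument that the citation to \cite{Bramson1991} is meant to cover; your caveat about $\g=0$ is accurate and matches the paper's actual use of the lemma.
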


\begin{proof}
	The result is proven by arguing as Lemma 2.1. in~\cite{Bramson1991} using, however, that, if $|\xis|\leq N$ for some integer $N\geq 1$, then the probability that all membrane potential within the system will vanish before any spiking is at least $\left(\nicefrac{\g}{1+\g}\right)^N$.
\end{proof}

\section*{Acknowledgements}
This article is part of the postdoctoral project (CNPq grant 155972/2018-9) of the author at IME-USP and has been produced as part of the activities of FAPESP Research, Innovation and Dissemination Center for Neuromathematics (grant 2013/07699-0, S. Paulo Research Foundation). The author would like to thank Antonio Galves and Aline Duarte for stimulating discussions about this subject.

\bibliographystyle{apacite}

\end{document}